\def\doi{Not assigned}
\newcommand{\mybox}[1]{%
	\begin{center}%
		\setlength{\fboxrule}{1pt}%
		\fbox{%
			\begin{minipage}{0.9\textwidth}%
				#1
			\end{minipage}%
		}%
	\end{center}%
}
\title{\uppercase{Accelerated projection-based forward-backward splitting algorithms for monotone inclusion problems}}
\author{ Bing Tan$^{1}$, Zheng Zhou$^{1}$, Xiaolong Qin$^{2,\dag}$}
\date{}
\begin{document}
\baselineskip 12pt

\maketitle
\begin{abstract}
In this paper, based on inertial and Tseng's ideas, we propose two projection-based algorithms to solve a monotone inclusion problem  in infinite dimensional Hilbert spaces. Solution theorems of strong convergence are obtained under the certain conditions. Some numerical experiments are presented to illustrate that  our algorithms are efficient than the existing results.
\end{abstract}

\begin{keyword}
Monotone operator, forward-backward splitting algorithm, strong convergence, inclusion problem.
\end{keyword}

\begin{MSC}
47H05, 49J40, 65K10,   47J20.
\end{MSC}

\thispagestyle{first}\renewcommand{\thefootnote}{\fnsymbol{footnote}}
\footnotetext{\hspace*{-5mm}
\renewcommand{\arraystretch}{1}
\begin{tabular}{@{}r@{}p{10cm}@{}}
$^\dag$& The corresponding author. Email address: qxlxajh@163.com (Xiaolong Qin)\\
$^1$&Institute of Fundamental and Frontier Sciences,
University of Electronic Science and Technology of China, Chengdu, China.\\
$^2$&Department of Mathematics, Zhejiang Normal University, Zhejiang, China.\\
\end{tabular}}

\vspace{-2mm}

\section{Introduction}

In this paper, we start with the general optimization problem
\begin{equation}\label{eq:P}
\min \{\Phi(\mathbf{x})=F(\mathbf{x})+G(\mathbf{x}) : x \in H\}\,,      \tag{P}
\end{equation}
where $ H $ is a real Hilbert space with  inner product $\langle\cdot, \cdot\rangle$ and  induced norm $\|\cdot\|$, $F : H \rightarrow(-\infty, \infty)$ is a continuously differentiable function  and  $G : H \rightarrow(-\infty, \infty]$ is a convex and closed function, which is assumed to be subdifferentiable on $\operatorname{dom} G$, the domain of $G$.
 If $\mathbf{x}^{\star} \in H$  is a
local minimum of \eqref{eq:P}, then it is a stationary point of \eqref{eq:P}, i.e.,
\begin{equation}\label{eq1}
\mathbf{0} \in \nabla F\left(\mathbf{x}^{\star}\right)+\partial G\left(\mathbf{x}^{\star}\right)\,,
\end{equation}
where $\partial G(\cdot)$ stands for the subdifferential of $G$. Note that if $ F $ is also convex,  then $\mathbf{x}^{\star}$  is the global minimum of \eqref {eq:P}. For any $t>0$, one sees from   \eqref{eq1}  that
\[
\begin{aligned}
\mathbf{0}  &\in t \nabla F\left(\mathbf{x}^{\star}\right)+t \partial G\left(\mathbf{x}^{\star}\right) \Leftrightarrow (I+t \partial G)\left(\mathbf{x}^{\star}\right)\\
&\in(I-t \nabla F)\left(\mathbf{x}^{\star}\right) \Leftrightarrow \mathbf{x}^{\star}=(I+t \partial G)^{-1}(I-t \nabla F)\left(\mathbf{x}^{\star}\right)\,,
\end{aligned}
\]
from which a  fixed-point scheme naturally arises  to generate the following iterative  sequence $\left\{\mathbf{x}_{k}\right\}$:
 \begin{equation}\label{eq2}
 \mathbf{x}_{k}=\left(I+t_{k} \partial G\right)^{-1}\left(I-t_{k} \nabla F\right)\left(\mathbf{x}_{k-1}\right)\,, \quad  \mathbf{x}_{0}\in \mathbb{R}\,, \,t_{k}>0\,.
 \end{equation}
Actually, \eqref{eq2}   is a special case of the  forward-backward  (FB)  algorithm  which was originally designed to find a zero of the more general inclusion problem:
\begin{equation}\label{origin}
\mathbf{0} \in A\left(\mathbf{x}^{\star}\right)+B\left(\mathbf{x}^{\star}\right)\,,
\end{equation}
where $ A $ and $ B $ are set-valued maximal monotone  maps. \eqref{origin} is reduced to  \eqref{eq1} if both
$ F $ and $G $ are convex and $A :=\nabla F$ and $B :=\partial G$.

A classic algorithm to solve \eqref{origin} is the known  forward-backward splitting algorithm,   which was first introduced by Passty~\cite{passty1979ergodic}, and Lions and Mercier~\cite{lions1979splitting}. In recent years, this method has been widely investigated in various problems, such as, coupled monotone inclusions, constrained variational inequalities, signal processing, image recovery, machine learning, convex optimization problems, etc; see, e.g., \cite{attouch2010parallel,combettes2005signal,ochs2014ipiano,duchi2009efficient} and the references therein. It is known that the FB method converges provided that   the inverse of forward mapping $ A^{-1} $ is strongly monotone and $B$ is maximal monotone~\cite{gabay1983augmented}. In 1997, Chen and Rockafellar~\cite{chen1997convergence}  gave the convergence rates analysis of the FB method. In 2000, Tseng~\cite{tseng2000modified} obtained a modified FB algorithm for zeros of  maximal monotone mappings. This method achieves convergence only by assuming that the forward mapping is continuous over a closed convex subset of its domain.

\mybox{
\textbf{Tseng (2000) : Modified Forward-Backward Splitting Algorithm.}
\begin{equation}\label{algtseng}
\left\{\begin{array}{l}
{y_{n}=\left(I+\gamma_{n} G\right)^{-1}\left(x_{n}-\gamma_{n} F x_{n}\right)}\,, \\
{x_{n+1}=y_{n}-\gamma_{n}\left(F y_{n}-F x_{n}\right)}\,.\end{array}\right.
\end{equation}
}

Polyak~\cite{polyak1964some} first proposed the inertial idea  to improve the convergence of the algorithms. Inertial-type methods,  which are  considered as a method to accelerate the convergence of Tseng-type iterative methods, are based on a discrete version of a second-order dissipative dynamical system~\cite{attouch2000heavy}.  In recent years, some authors constructed various fast iterative algorithms via inertial extrapolation techniques on some classical methods, such as,  inertial proximal point algorithms, inertial Mann algorithms,  inertial Douglas-Rachford splitting  algorithms, inertial alternating direction method of multipliers, inertial forward-backward splitting algorithms  and inertial extragradient algorithms, etc. On the other hand,   Nesterov~\cite{Nesterov1983A} developed an acceleration scheme which improves the  convergence speed of the forward-backward algorithm  from the standard $O\left(k^{-1}\right)$  to $O\left(k^{-2}\right)$. In addition, Attouch and Peypouquet~\cite{attouch2016rate} proved that the Nesterov's accelerated forward-backward method is actually $o\left(k^{-2}\right)$  rather than  $O\left(k^{-2}\right)$.

In 2015, Lorenz and Pock~\cite{pock} proposed the following  inertial forward-backward algorithm by combining the inertial idea with the forward-backward algorithm for monotone operators. It should be noted  that Algorithm \eqref{alg13} is still   weakly convergent.

\mybox{
	\textbf{Lorenz and Pock (2015) : Inertial Forward-Backward Algorithm.}
	\begin{equation}\label{alg13}
	\left\{\begin{array}{l}{y_{n}=x_{n}+\alpha_{n}\left(x_{n}-x_{n-1}\right)}\,, \\ {x_{n+1}=\left(I+\gamma_{n} G\right)^{-1}\left(y_{n}-\gamma_{n} F y_{n}\right)}\,. \end{array}\right.
	\end{equation}		
}

In practical applications, many problems, such as, quantum physics and   image reconstruction, are in  infinite dimensional spaces.
To investigate these problems,   norm convergence is usually preferable to the weak convergence.
In 2003, Nakajo and Takahashi~\cite{nakajo2003strong} established strong convergence of the Mann iteration with the aid of projections.
 Indeed, they considered the following algorithm:

\mybox{
	\textbf{Nakajo and Takahashi (2003) : Hybrid Projection Method.}
\begin{equation}\label{alg141}
\left\{\begin{array}{l}{y_{n}=\alpha_{n} x_{n}+\left(1-\alpha_{n}\right) T x_{n}}\,, \\ {C_{n}=\left\{u \in C:\left\|y_{n}-u\right\| \leqslant\left\|x_{n}-u\right\|\right\}}\,, \\ {Q_{n}=\left\{u \in C:\left\langle x_{n}-u, x_{0}-x_{n}\right\rangle \geqslant 0\right\}}\,, \\ {x_{n+1}=\mathcal{P}_{C_{n} \cap Q_{n}} x_{0}\,, \quad n \in \mathbb{N}}\,,\end{array}\right.
\end{equation}
}
\noindent where $\left\{\alpha_{n}\right\} \subset[0,1)$, $ T $ is a nonexpansive mapping on $C$ and  $\mathcal{P}_{C_{n} \cap Q_{n}}$ is the nearest point projection from $C$ onto $C_{n} \cap Q_{n}$. This method is now referred as  the hybrid projection method.
Inspired by Nakajo and Takahashi~\cite{nakajo2003strong},   Takahashi, Takeuchi and Kubota~\cite{takahashi2008strong} also proposed
a projection-based method and obtain the strong convergence of the method, which is now  called the  shrinking projection method.
In recent years, many authors studied these projection-based methods in various spaces; see, e.g., \cite{plubtieng2007strong,kim,dong,ziming}.

Inspired and motivated by the above works, we propose two new projection-based inertial solution methods with   adaptive stepsizes, which are more flexible than the fixed stepsizes. Solution theorems of strong convergence are established in the framework of real Hilbert spaces. Numerical examples to illustrate the efficiency and robustness of the proposed algorithms are provided.
Our paper is organized as follows. In Section \ref{sec2}, we give some useful and necessary preliminaries for our convergence analysis and numerical experiments. In Section \ref{sec3}, we propose our new algorithms, and obtain solution theorems of strong convergence  under some mild conditions. In Section \ref{sec4}, we give some numerical results in convex minimization problems to show the efficient and robust of our algorithms.  Section \ref{sec5} ends this paper.

\section{Preliminaries}\label{sec2}
Let $ C $ be a non-empty, convex and closed set in a real Hilbert space $ H $. For a given sequence $\left\{x_{n}\right\} \subset H$, let $\omega_{w}\left(x_{n}\right):=\left\{x: \exists x_{n_{j}} \rightharpoonup x\right\}$ denote the weak $ w $-limit set of $\{x_{n}\}$. For any $ x, y \in H $,  we have
\begin{enumerate}[(1)]
	\item $\|x-y\|^{2}+2\langle x-y, y\rangle =\|x\|^{2}-\|y\|^{2}$;
	\item $\|x+y\|^{2} \leq\|x\|^{2}+2\langle y, x+y\rangle $;
	\item $\|t x+(1-t) y\|^{2}+t(1-t)\|x-y\|^{2} =t\|x\|^{2}+(1-t)\|y\|^{2}, \,\forall t\in\mathbb{R} $.
\end{enumerate}

Let $F : H \rightarrow H$ be an operator. The fixed-pint set of $F$ is denoted by $\operatorname{Fix}(F)$, where $\operatorname{Fix}(F):=\{x \in H \mid F x=x\}$.  $F$ is said to be $L$-Lipschitz continuous with $L>0$ if
\[
\|F x-F y\| \leq L\|x-y\| \,, \,\forall x, y \in H\,.
\]
If $L=1$, then $ F $ is said to be nonexpansive.
$ F $ is said to be monotone if
\[
\langle F x-F y, x-y\rangle \geq 0 \,, \,\forall x, y \in H\,.
\]
$ F $ is said to be strongly monotone with $L>0$ if
\[
\langle F x-F y, x-y\rangle \geq L\|x-y\|^2\,, \,\forall x, y \in H\,.
\]		

For  any $ x \in H$, there exists a unique nearest point in $C$, denoted by $\mathcal{P}_{C} x$, such that
\[
\mathcal{P}_{C}(x) {:=} \operatorname{argmin}_{y \in C}\|x-y\|\,,
\]
where $\mathcal{P}_{C}$ is called the metric projection of $ H $ onto $ C $. It has such an equivalent form $ \langle \mathcal{P}_{C} x-x, \mathcal{P}_{C} x-y\rangle \le 0, \forall y \in C$, and can also be converted to $\left\|y-\mathcal{P}_{C} x\right\|^{2}+\left\|x-\mathcal{P}_{C} x\right\|^{2} \leq\|x-y\|^{2}$.
It can be calculated that the projection of $x_{0}$ on a polyhedron is described by linear inequalities  $A x \preceq b$ via the following quadratic programming (QP)
\[
{\text { minimize }}  {\left\|x-x_{0}\right\|_{2}^{2}}\,, \quad  {\text { subject to }} {A x \preceq b}\,.
\]

We next give some special cases with simple analytical solutions.
\begin{enumerate}[(i)]
\item The Euclidean projection of $x_{0}$  onto an affine subspace $\Omega=\{x : A x=b\}$  with  $A \in \mathbb{R}^{m \times n}$ and  $\operatorname{rank}(A)=m<n$ is given
by
\[
\mathcal{P}_{\Omega}(x_{0})=x_{0}+A^{\mathsf{T}}\left(A A^{\mathsf{T}}\right)^{-1}(b-A x_{0})\,.
\]

\item The Euclidean projection of $x_{0}$  onto a halfspace $ \Omega=\left\{x : a^{\mathsf{T}} x \leq b \;(a \neq 0)\right\}  $ is given
by
 \[
\mathcal{P}_{\Omega}(x_{0}) =\left\{\begin{aligned}&x_{0}\,, & {\text { if }a^{\mathsf{T}} x_{0} \leq b}\,;\\&x_{0}+\frac{b-a^{\mathsf{T}} x_{0}}{\|a\|^{2}} a\,, & {\text { if } a^{\mathsf{T}} x_{0}>b}\,.\end{aligned}\right.
 \]

\end{enumerate}

Let $G$ be a proper,
lower semi-continuous and convex function.
\[
\operatorname { prox } _ { \gamma G } ( y )  { := } \operatorname { argmin } _ { x \in H }   \left\{\frac { 1 } { 2 } \| x - y \| ^ { 2 }+\gamma G ( x )\right\}\,, \,\forall  y\in H\,,
\]
where $\gamma$ is a positive real number,
is called   the proximity operator.

 Note that it has the closed-form expression in some important cases. For example,
if the Euclidean norm $ G(x)=\|x\|_1 $, then one has  the shrinkage-threshold operator $\mathcal{T}_{\gamma}(y)$
\[
\operatorname{prox}_{\gamma G}(y) = \left(\mathcal{T}_{\gamma}(y)\right)_{i}=\left\{\begin{aligned}&\operatorname{sign}\left(y_{i}\right) \cdot\left(\left|y_{i}\right|-\gamma\right)_{+}\,,&\text{if } \left|y_{i}\right|>\gamma\,; \\ &0 \,,&\text{if } \left|y_{i}\right| \leq \gamma\,.\end{aligned}\right.
\]

Let $G : H \rightarrow 2^{H}$ be a multivalued operator on $H$. $G$
is said to be  monotone  iff  $\langle p-q, x-y\rangle \geq 0$ for any $ x, y \in H$, $ p\in Gx $ and $q \in Gy$.   $G : H \rightarrow 2^{H}$
Recall that a mutivalued operator is said to be  maximal  iff its  Graph  is not
contained in the graph of any other monotone operator properly.
One knows that a monotone $G : H \rightarrow 2^{H}$  is maximal iff  for any $ (x, p) \in H \times H$, $\langle p-q, x-y\rangle \geq 0$  for every $(y, q) \in \operatorname{Graph}(G)$  yields $p \in G x$.

\begin{lemma}\cite{qinoptim}\label{lemmaw}
Let $F : H \rightarrow H$  be a operator  and $G : H \rightarrow 2^{H}$ be a maximal monotone operator. Define $T_{\gamma} :=(I+\gamma G)^{-1}(I-\gamma F),\gamma>0$. Then
, $ \operatorname{Fix}\left(T_{\gamma}\right)=(F+G)^{-1}(0) $.
\end{lemma}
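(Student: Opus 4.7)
The plan is to establish the equality by a direct chain of equivalences, exploiting the fact that for a maximal monotone operator $G$, the resolvent $J_{\gamma G} := (I+\gamma G)^{-1}$ is a well-defined single-valued operator on all of $H$ (by Minty's theorem combined with strict monotonicity of $I+\gamma G$). I will only need this resolvent property; no topological or convergence arguments are required.

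First I would fix $\gamma > 0$ and an arbitrary $x \in H$, and translate the condition $x \in \operatorname{Fix}(T_\gamma)$ into its defining equation
\[
x = (I+\gamma G)^{-1}(I-\gamma F)(x).
\]
Since the resolvent is single-valued, applying $I+\gamma G$ to both sides yields the equivalent set-valued inclusion
\[
(I+\gamma G)(x) \ni (I-\gamma F)(x),
\]
which expands to $x + \gamma G(x) \ni x - \gamma F(x)$.

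Next I would cancel the $x$ terms and divide by $\gamma > 0$, obtaining
\[
-F(x) \in G(x) \quad\Longleftrightarrow\quad 0 \in F(x) + G(x),
\]
which is exactly $x \in (F+G)^{-1}(0)$. Each step above is an equivalence, so the two sets coincide.

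The only subtle point — and the one I would make sure to justify explicitly — is that applying $I+\gamma G$ to both sides of the resolvent identity is a true equivalence rather than just an implication. This relies on the fact that maximal monotonicity of $G$ makes $(I+\gamma G)^{-1}$ single-valued with full domain, so $y = (I+\gamma G)^{-1}(z)$ if and only if $z \in (I+\gamma G)(y)$. Beyond this observation the argument is entirely algebraic, so I do not expect any real obstacle.
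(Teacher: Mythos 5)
Your argument is correct: the chain of equivalences $x = T_\gamma x \iff (I-\gamma F)x \in (I+\gamma G)x \iff -Fx \in Gx \iff 0 \in (F+G)x$ is exactly the standard proof of this fact, and you rightly isolate the one point needing care, namely that $(I+\gamma G)^{-1}$ is single-valued with full domain (single-valuedness from monotonicity of $G$, full domain from Minty's theorem). The paper itself gives no proof and simply cites the reference \cite{qinoptim}, so there is nothing to compare against; your proof is the canonical one and fills that gap adequately.
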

\begin{lemma}\cite{brezis1973ope}\label{lemmae}
Let $F : H \rightarrow H$ be a Lipschitz continuous and monotone mapping, and let $G : H \rightarrow 2^{H}$ be a maximal monotone mapping. Then  $ F+G $  is   maximally monotone.
\end{lemma}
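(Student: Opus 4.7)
The plan is to prove maximal monotonicity of $F+G$ via the Minty-type surjectivity characterization: a monotone operator $T$ on $H$ is maximally monotone if and only if $R(I+\mu T)=H$ for some (equivalently, every) $\mu>0$. Monotonicity of $F+G$ is itself immediate: given $p=Fx+p'\in(F+G)x$ and $q=Fy+q'\in(F+G)y$ with $p'\in Gx$ and $q'\in Gy$, monotonicity of $F$ and of $G$ separately gives $\langle Fx-Fy,x-y\rangle\ge 0$ and $\langle p'-q',x-y\rangle\ge 0$, whose sum is exactly $\langle p-q,x-y\rangle\ge 0$. It therefore suffices to verify the range condition for a single convenient $\mu>0$.

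To do so, fix an arbitrary $w\in H$ and look for $x^\star\in H$ satisfying $w\in x^\star+\mu F x^\star+\mu G x^\star$. Because $G$ is maximal monotone, the resolvent $J_{\mu G}:=(I+\mu G)^{-1}$ is a well-defined single-valued nonexpansive self-map of $H$, and the target inclusion rearranges to $x^\star=J_{\mu G}(w-\mu F x^\star)$. Introduce the self-map
\[
T_w(x):=J_{\mu G}\bigl(w-\mu F x\bigr),\qquad x\in H.
\]
Since $J_{\mu G}$ is nonexpansive and $F$ is $L$-Lipschitz, $T_w$ is $\mu L$-Lipschitz; choosing any $\mu\in(0,1/L)$ turns $T_w$ into a strict contraction. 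The Banach fixed-point theorem then delivers a unique $x^\star$ with $T_w(x^\star)=x^\star$, which is exactly a preimage of $w$ under $I+\mu(F+G)$. As $w$ ranges over $H$, this gives $R(I+\mu(F+G))=H$, and Minty's characterization concludes that $F+G$ is maximally monotone.

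The main obstacle — really the only subtlety — is being careful about the reduction to Minty's criterion: one must note that surjectivity of $I+\mu(F+G)$ for some positive $\mu$ suffices, not specifically $\mu=1$, since maximal monotonicity is invariant under multiplication by a positive scalar. This gives the freedom to choose $\mu<1/L$ so the fixed-point map becomes a contraction. Everything else is standard machinery: single-valuedness and nonexpansiveness of the resolvent of a maximal monotone operator, together with the Banach contraction principle, and the Lipschitz hypothesis on $F$ is exactly the ingredient that controls the $F$-perturbation.
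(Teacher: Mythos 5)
Your proof is correct. The paper itself offers no argument for this lemma --- it is simply quoted from Brezis's monograph --- so there is nothing internal to compare against; your route (monotonicity of the sum, then Minty's surjectivity criterion verified by rewriting $w\in(I+\mu(F+G))x^\star$ as the fixed-point equation $x^\star=J_{\mu G}(w-\mu Fx^\star)$ and invoking the Banach contraction principle for $\mu<1/L$) is exactly the classical argument, and you correctly flag the one subtlety, namely that surjectivity of $I+\mu T$ for a single $\mu>0$ already yields maximality because maximal monotonicity is preserved under positive scaling.
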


\begin{lemma}\cite{kim2006strong}\label{lemmaa}
Let $ C $ be a   convex and closed set in  a real Hilbert space $ H $. Given $x, y, z \in H$ and $a \in \mathbb{R}$,
$ \left\{v \in C :\|y-v\|^{2} \leq\|x-v\|^{2}+\langle z, v\rangle+ a\right\} $
is convex and closed.
\end{lemma}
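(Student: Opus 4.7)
The plan is to reduce the defining inequality of the set to a single linear inequality in the variable $v$, so that the set becomes the intersection of a closed halfspace with the convex closed set $C$.

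First I would expand both squared norms using the polarization-type identity $\|u-v\|^2 = \|u\|^2 - 2\langle u, v\rangle + \|v\|^2$. Applying this to $\|y-v\|^2$ and $\|x-v\|^2$, the $\|v\|^2$ terms cancel on both sides of the defining inequality, and what remains is
\[
\|y\|^2 - 2\langle y, v\rangle \;\le\; \|x\|^2 - 2\langle x, v\rangle + \langle z, v\rangle + a,
\]
which rearranges to
\[
\langle 2(x-y) - z,\, v\rangle \;\le\; a + \|x\|^2 - \|y\|^2.
\]
The right-hand side is a fixed real constant and the left-hand side is a continuous linear functional in $v$, so this inequality defines a closed halfspace $H_0 \subset H$ (or all of $H$ if $2(x-y)-z = 0$ and the constant is non-negative, or the empty set if it is negative; in either degenerate case the conclusion is trivial).

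Next I would observe that the set in question is exactly $C \cap H_0$. Since $C$ is convex and closed by hypothesis, and a closed halfspace is itself convex and closed (closedness from continuity of $v \mapsto \langle 2(x-y)-z, v\rangle$, convexity from linearity of this functional), the intersection is convex and closed, finishing the argument.

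I do not expect any real obstacle here; the lemma is purely a computational reformulation, and the only thing to be careful about is handling the possibly degenerate case $2(x-y)-z=0$ cleanly, which I would absorb by simply noting that the set is then either $C$ or $\emptyset$, both of which are convex and closed.
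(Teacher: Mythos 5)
Your argument is correct and complete: expanding the two squared norms cancels the $\|v\|^2$ terms and reduces the defining condition to the linear inequality $\langle 2(x-y)-z,\,v\rangle \le \|x\|^{2}-\|y\|^{2}+a$, so the set is the intersection of $C$ with a closed halfspace (or with $H$ or $\emptyset$ in the degenerate case), hence convex and closed. The paper itself gives no proof of this lemma, citing it from Kim and Xu (2006); your derivation is exactly the standard argument behind that reference, so there is nothing to reconcile.
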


\begin{lemma}\label{lemma0} \cite{martinez2006strong}
Let $ C $ be a   convex and closed set in a real Hilbert space  $ H $, $\left\{x_{n}\right\}\subset H $ and $u \in H$. Let $q=\mathcal{P}_{C} u$. If the weak $\omega$-limit set $\omega_{w}\left(x_{n}\right) \subset C$ and
$\left\|x_{n}-u\right\| \leq\|u-q\|,$ $\forall n\in \mathbb{N},$
then $\{x_{n}\}$ converges to $q$ in norm.
\end{lemma}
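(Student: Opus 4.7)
The plan is to show first that every weak cluster point of $\{x_n\}$ coincides with $q$, and then upgrade weak convergence to norm convergence by expanding $\|x_n-q\|^2$ and using the bound on $\|x_n-u\|$.

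First I would note that the hypothesis $\|x_n-u\|\le\|u-q\|$ directly guarantees boundedness of $\{x_n\}$. Take any $x^\star\in\omega_w(x_n)$, so there is a subsequence $x_{n_j}\rightharpoonup x^\star$. By assumption $x^\star\in C$, so by the defining property of the metric projection $q=\mathcal{P}_C u$ we have $\|u-q\|\le\|u-x^\star\|$. On the other hand, weak lower semicontinuity of the norm gives
\[
\|u-x^\star\|\le\liminf_{j\to\infty}\|u-x_{n_j}\|\le\|u-q\|.
\]
Hence $\|u-x^\star\|=\|u-q\|$ with $x^\star\in C$, and uniqueness of the nearest point projection onto the closed convex set $C$ forces $x^\star=q$. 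Since every weak cluster point equals $q$ and $\{x_n\}$ is bounded, it follows that $x_n\rightharpoonup q$.

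To promote this to strong convergence, I would expand
\[
\|x_n-q\|^2=\|x_n-u\|^2+2\langle x_n-u,\,u-q\rangle+\|u-q\|^2.
\]
The first term is bounded above by $\|u-q\|^2$ for every $n$. Since $x_n\rightharpoonup q$, the inner product satisfies $\langle x_n-u,u-q\rangle\to\langle q-u,u-q\rangle=-\|u-q\|^2$. Combining these,
\[
\limsup_{n\to\infty}\|x_n-q\|^2\le\|u-q\|^2-2\|u-q\|^2+\|u-q\|^2=0,
\]
which yields $x_n\to q$ in norm.

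There is no real obstacle here; the only point that requires a little care is the identification $x^\star=q$, which relies crucially on both $x^\star\in C$ (from the $\omega_w$ assumption) and the squeeze equality $\|u-x^\star\|=\|u-q\|$ together with the uniqueness of the projection onto a closed convex set. Everything else is a standard weak-to-strong convergence argument via the parallelogram-type identity.
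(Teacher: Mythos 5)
Your proof is correct and complete; the paper itself states this lemma without proof, citing Martinez--Yanes and Xu, and your argument (identifying every weak cluster point with $q$ via weak lower semicontinuity of the norm plus uniqueness of the metric projection, then upgrading to norm convergence by expanding $\|x_n-q\|^2$ and using the hypothesis $\|x_n-u\|\le\|u-q\|$) is essentially the standard proof given in that reference. No gaps.
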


\section{Main Results}\label{sec3}

In this section, we   assume that the following conditions are satisfied for our convergence analysis.
\begin{description}
\item[\textbf{(A1)}]  The solution set of the inclusion problem \eqref{origin} is nonempty, i.e., $ \Omega := (F+G)^{-1}(0) \neq~\emptyset $.
\item[\textbf{(A2)}]  The mapping $G : H \rightarrow 2^{H}$ is maximal monotone, $F : H \rightarrow H$ is $ L $-Lipschitz continuous and monotone.
\end{description}

\subsection{The Inertial Hybrid Projection Algorithm}

\begin{algorithm}[H]
	\textbf{Algorithm 3.1: Inertial Hybrid Projection Algorithm (IHPA).}\\
	\label{CQ}
	\KwIn{$x_{-1} = x_{0}$,
		$\gamma_{0}>0$, $ \mu \in (0,1) $,  $ \alpha_{n}\in[0,1) $.}
	\For{$n =0:\text{Maxiters}$}
	{
		\begin{equation}\label{eq:CQ}
		\left\{\begin{aligned}
		w_{n}&=x_{n}+\alpha_{n}\left(x_{n}-x_{n-1}\right)\,,\\
		y_{n}&=\left(I+\gamma_{n} G\right)^{-1}\left(I-\gamma_{n} F\right) w_{n}\,, \\
		z_{n} &=y_{n}-\gamma_{n}\left(F y_{n}-F w_{n}\right)\,, \\
		C_{n} &=\Big\{u \in H :\left\|z_{n}-u\right\|^2 \leq\left\|w_{n}-u\right\|^2-\big( 1-\mu^{2}\frac{\gamma_{n}^{2}}{\gamma_{n+1}^{2}}\big)\left\|w_{n}-y_{n}\right\|^{2}\Big\},\\
		Q_{n} &=\left\{u \in H :\left\langle x_{n}-u, x_{n}-x_{0}\right\rangle \leq 0\right\}\,,\\
		x_{n+1}&=\mathcal{P}_{C_{n} \cap Q_{n}} x_{0} \,, \, n\ge 0\,.\end{aligned}\right.
		\end{equation}
		Update $ \gamma_{n} $ by \eqref{update},
	}
\end{algorithm}
\noindent where $\{\gamma_{n}\}$ is the stepsize generated by
\begin{equation}\label{update}
\gamma_{n+1}=\left\{\begin{aligned}&\min \left\{\frac{\mu\left\|w_{n}-y_{n}\right\|}{\left\|F w_{n}-F y_{n}\right\|}, \gamma_{n}\right\}\,, &{\text{if } F w_{n}-F y_{n} \neq 0}\,; \\ &\gamma_{n}\,, &{\text{otherwise}}\,.\end{aligned}\right.
\end{equation}

\begin{remark}
The inertial parameter $ \{\alpha_{n}\} $ in \eqref{eq:CQ} can be selected as an arbitrary sequence in $ [0,1) $ to produce acceleration. Notice that the parameter  $\{\alpha_{n}\}$ in \eqref{eq:CQ} was generated by the expression $\big(\frac{t_{n-1}-1}{t_{n}}\big)$ in~\cite{beck2009fast} and $\frac{n-1}{n+3}$ in~\cite{attouch2016rate}. In this paper, $ \{\alpha_{n}\}$ will also be adaptively updated by
\begin{equation}\label{alpha}
{\alpha}_{n}=\left\{\begin{aligned} &\min \left\{\alpha,  \frac{\xi_{n}}{\left\|x_{n}-x_{n-1}\right\|}\right\}\,, &{\text{if } x_{n} \neq x_{n-1}}\,; \\ &\alpha\,, &{\text{otherwise}}\,,\end{aligned}\right.
\end{equation}
where $ \alpha\in[0,1) $,  the sequence $ \{\xi_n\} $ satisfies $ \lim_{n \rightarrow \infty}  \xi_n=0$ and $ \sum_{n=1}^{\infty} \xi_{n}=\infty $.
\end{remark}

The following lemmas play a significant role in this paper for the convergence analysis.

\begin{lemma}\label{lemma2}
Let $\left\{z_{n}\right\}$ be a sequence generated
	by Algorithm 3.1. If 	  conditions (A1) and (A2) hold, then
	\begin{equation}\label{q1}
	\left\|z_{n}-p\right\|^{2} \leq\left\|w_{n}-p\right\|^{2}-\bigg(1-\mu^{2} \frac{\gamma_{n}^{2}}{\gamma_{n+1}^{2}}\bigg)\left\|w_{n}-y_{n}\right\|^{2} \,, \, \forall p \in \Omega\,.
	\end{equation}
\end{lemma}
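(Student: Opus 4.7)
The plan is to follow the standard Tseng-type estimate, adapted to the adaptive step-size rule \eqref{update}. First I would extract from the resolvent step the inclusion characterizing $y_n$: since $y_n = (I+\gamma_n G)^{-1}(w_n - \gamma_n F w_n)$, the element $(w_n - \gamma_n F w_n - y_n)/\gamma_n$ belongs to $G y_n$. For any $p \in \Omega$, Lemma~\ref{lemmaw} combined with $-Fp \in Gp$ together with monotonicity of $G$ yields the key inequality
\[
\langle w_n - y_n, y_n - p\rangle \ge \gamma_n \langle Fw_n - Fp,\ y_n - p\rangle.
\]

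Next I would expand $\|z_n - p\|^2$ directly from the definition $z_n = y_n - \gamma_n(Fy_n - Fw_n)$:
\[
\|z_n - p\|^2 = \|y_n - p\|^2 - 2\gamma_n\langle Fy_n - Fw_n,\ y_n - p\rangle + \gamma_n^2 \|Fy_n - Fw_n\|^2,
\]
and rewrite $\|y_n - p\|^2 = \|w_n - p\|^2 - \|w_n - y_n\|^2 - 2\langle w_n - y_n,\ y_n - p\rangle$ using the identity $y_n - p = (w_n - p) - (w_n - y_n)$. Combining the two mixed inner-product terms gives $-2\langle (w_n - \gamma_n Fw_n) - (y_n - \gamma_n Fy_n),\ y_n - p\rangle$, and applying the monotonicity inequality above collapses this to $-2\gamma_n\langle Fy_n - Fp,\ y_n - p\rangle$, which is $\le 0$ by monotonicity of $F$. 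At this stage I arrive at
\[
\|z_n - p\|^2 \le \|w_n - p\|^2 - \|w_n - y_n\|^2 + \gamma_n^2 \|Fw_n - Fy_n\|^2.
\]

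Finally, I would invoke the update rule \eqref{update}: in the nontrivial case $Fw_n \ne Fy_n$ one has $\gamma_{n+1} \le \mu\|w_n - y_n\|/\|Fw_n - Fy_n\|$, hence $\|Fw_n - Fy_n\| \le (\mu/\gamma_{n+1})\|w_n - y_n\|$, and therefore
\[
\gamma_n^2 \|Fw_n - Fy_n\|^2 \le \mu^2 \frac{\gamma_n^2}{\gamma_{n+1}^2}\,\|w_n - y_n\|^2,
\]
while in the degenerate case the same bound holds trivially (both sides are zero). Substituting back produces the claimed inequality~\eqref{q1}. The only step that requires care is the algebraic regrouping that fuses the two inner products into $\langle (I-\gamma_n F)w_n - (I-\gamma_n F)y_n,\ y_n - p\rangle$, so that the monotonicity of $G$ (via the resolvent inclusion) and the monotonicity of $F$ can be applied in tandem; everything else is routine expansion and an application of the step-size rule.
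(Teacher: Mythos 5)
Your proposal is correct and follows essentially the same route as the paper: the same expansion of $\|z_n-p\|^2$, the same resolvent inclusion $(w_n-\gamma_n Fw_n-y_n)/\gamma_n\in Gy_n$, and the same use of the step-size rule \eqref{update}. The only cosmetic difference is that you apply the monotonicity of $G$ (against $-Fp\in Gp$) and of $F$ separately, whereas the paper invokes the monotonicity of the sum $F+G$ in a single step; the resulting inequality $\left\langle y_n-p,\ w_n-y_n+\gamma_n(Fy_n-Fw_n)\right\rangle\geq 0$ is identical.
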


\begin{proof}
Setting $ a_{n} = \gamma_{n}^{2}\left\|F y_{n}-F w_{n}\right\|^{2}  -2 \gamma_{n}\left\langle y_{n}-p, F y_{n}-F w_{n}\right\rangle $, one has
\begin{equation}\label{w}
\begin{aligned}\left\|z_{n}-p\right\|^{2} =&\left\|y_{n}-p\right\|^{2}+\gamma_{n}^{2}\left\|F y_{n}-F w_{n}\right\|^{2}-2 \gamma_{n}\left\langle y_{n}-p, F y_{n}-F w_{n}\right\rangle\\ =&\left\|w_{n}-p\right\|^{2}+\left\|y_{n}-w_{n}\right\|^{2}+2\left\langle w_{n}-p,y_{n}-w_{n}\right\rangle + a_{n}\\ =&\left\|w_{n}-p\right\|^{2}+\left\|y_{n}-w_{n}\right\|^{2}-2\left\langle y_{n}-w_{n}, y_{n}-w_{n}\right\rangle + 2\left\langle y_{n}-w_{n}, y_{n}-p\right\rangle + a_{n}\\
=&\left\|w_{n}-p\right\|^{2}-\left\|y_{n}-w_{n}\right\|^{2}-2\left\langle y_{n}-p, w_{n}-y_{n}+\gamma_{n}\left(F y_{n}-F w_{n}\right)\right\rangle\\ 
&+\gamma_{n}^{2}\left\|F y_{n}-F w_{n}\right\|^{2}\,.
\end{aligned}
\end{equation}	
Note that
\[
	\gamma_{n+1}=\min \left\{\frac{\mu\left\|w_{n}-y_{n}\right\|}{\left\|F w_{n}-F y_{n}\right\|}, \gamma_{n}\right\} \leq \frac{\mu\left\|w_{n}-y_{n}\right\|}{\left\|F w_{n}-F y_{n}\right\|}\,,
\]
which means that
\begin{equation}\label{q}
	\left\|F w_{n}-F y_{n}\right\| \leq \frac{\mu}{\gamma_{n+1}}\left\|w_{n}-y_{n}\right\|\,.
\end{equation}
If $F w_{n}=F y_{n}$, then   inequality
\eqref{q}  holds obviously.
Combining  \eqref{w} and \eqref{q}, one obtains
\begin{equation}\label{e}
\begin{aligned}\left\|z_{n}-p\right\|^{2} \leq \left\|w_{n}-p\right\|^{2}-\bigg(1-\mu^{2} \frac{\gamma_{n}^{2}}{\gamma_{n+1}^{2}}\bigg)\left\|w_{n}-y_{n}\right\|^{2} -2\left\langle y_{n}-p,w_{n}-y_{n}+\gamma_{n}\left(F y_{n}-F w_{n}\right)\right\rangle\,. \end{aligned}
\end{equation}
	
Next, one proves
\begin{equation}\label{r}
\left\langle y_{n}-p,w_{n}-y_{n}+\gamma_{n}\left(F y_{n}-F w_{n}\right)\right\rangle \geq 0\,.
\end{equation}
From $ y_{n}=\left(I+\gamma_{n} G\right)^{-1}\left(I-\gamma_{n} F\right) w_{n} $, one obtains $\left(I-\gamma_{n} F\right) w_{n}\in\left(I+\gamma_{n} G\right) y_{n} $. Since 	$G$ is maximally monotone, one concludes that there exists $u_{n} \in G y_{n}$ such that
$\left(I-\gamma_{n} F\right) w_{n}=y_{n}+\gamma_{n} u_{n}$. This means that
\begin{equation}\label{t}
u_{n}=\frac{1}{\gamma_{n}}\left(w_{n}-\gamma_{n} F w_{n}-y_{n}\right)\,.
\end{equation}	
On the other hand, one has $ 0 \in(F+G) p $ and $ F y_{n}+u_{n} \in(F+G) y_{n} $. Since $ F+G $  is maximally monotone, one gets
\begin{equation}\label{y}
\left\langle F y_{n}+u_{n}, y_{n}-p\right\rangle \geq 0\,.
\end{equation}
Substituting \eqref{t} into \eqref{y}, one gets
\[
\frac{1}{\gamma_{n}}\left\langle w_{n}-\gamma_{n} F w_{n}-y_{n}+\gamma_{n} F y_{n}, y_{n}-p\right\rangle \geq 0\,,
\]
which means that
$\left\langle w_{n}-y_{n}+\gamma_{n}\left(F y_{n}-F w_{n}\right), y_{n}-p\right\rangle \geq 0 $.
From \eqref{e} and \eqref{r}, one concludes    \eqref{q1} immediately.
\end{proof}

\begin{lemma}\label{lemma3}
Let $\left\{x_{n}\right\},\left\{w_{n}\right\}$ and $\left\{y_{n}\right\}$  be   three sequences generated by Algorithm 3.1.
Assume that conditions (A1) and  (A2) hold.
If  $\lim _{n \rightarrow \infty}\left\|w_{n}-x_{n}\right\|=\lim _{n \rightarrow \infty}\left\|w_{n}-y_{n}\right\|=~0$, and $\left\{x_{n_{k}}\right\}$, which is a subsequence of $\{x_n\}$,  converges weakly to
	some $q \in H$, then $q \in \Omega$, where $ \Omega = (F+G)^{-1}(0)$.
\end{lemma}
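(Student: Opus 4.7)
The plan is to exhibit $q$ as the limit of a sequence of points that lie close to the graph of $F+G$, and then invoke maximal monotonicity of $F+G$ (Lemma~\ref{lemmae}) to conclude $0 \in (F+G)(q)$.

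First I would deduce the weak convergence of the auxiliary sequences. From $\|w_n - x_n\| \to 0$ and $x_{n_k} \rightharpoonup q$, one immediately gets $w_{n_k} \rightharpoonup q$; then from $\|w_n - y_n\| \to 0$, also $y_{n_k} \rightharpoonup q$. Next I would unpack the resolvent step exactly as in the proof of Lemma~\ref{lemma2}: since $y_n = (I+\gamma_n G)^{-1}(I-\gamma_n F)w_n$, there exists $u_n \in G(y_n)$ with
\[
u_n = \tfrac{1}{\gamma_n}(w_n - y_n) - F(w_n),
\]
so that
\[
v_n := F(y_n) + u_n = F(y_n) - F(w_n) + \tfrac{1}{\gamma_n}(w_n - y_n) \in (F+G)(y_n).
\]

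The crucial ingredient is that $v_n \to 0$ strongly. The term $F(y_n) - F(w_n)$ tends to zero because $F$ is $L$-Lipschitz and $\|w_n - y_n\| \to 0$. For the other term I need $\{\gamma_n\}$ bounded below: by $L$-Lipschitz continuity of $F$, whenever $Fw_n \neq Fy_n$ one has $\mu\|w_n-y_n\|/\|Fw_n-Fy_n\| \geq \mu/L$, and combined with the update rule \eqref{update} a straightforward induction yields $\gamma_n \geq \min\{\mu/L,\gamma_0\} > 0$ for every $n$. Hence $\|v_n\| \leq L\|w_n-y_n\| + \|w_n-y_n\|/\gamma_n \to 0$.

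To finish, for any pair $(z,v) \in \operatorname{Graph}(F+G)$ monotonicity of $F+G$ gives $\langle v_{n_k} - v,\, y_{n_k} - z\rangle \geq 0$. Passing to the limit using $v_{n_k} \to 0$ strongly and $y_{n_k} \rightharpoonup q$ yields $\langle -v, q-z\rangle \geq 0$, i.e.\ $\langle v - 0,\, z - q\rangle \geq 0$ for every $(z,v)$ in the graph. By Lemma~\ref{lemmae}, $F+G$ is maximal monotone, so this characterisation forces $0 \in (F+G)(q)$, i.e.\ $q \in \Omega$. The main (and really only) obstacle is verifying that $\gamma_n$ stays bounded away from zero; once that is in hand the Lipschitz continuity of $F$ drives $v_n \to 0$, and a standard weak-strong pairing completes the argument.
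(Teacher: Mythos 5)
Your argument is correct and follows essentially the same route as the paper: both unpack the resolvent to produce $v_n = F(y_n)-F(w_n)+\frac{1}{\gamma_n}(w_n-y_n)\in (F+G)(y_n)$, show it vanishes using the Lipschitz bound and the lower bound $\gamma_n\ge\min\{\gamma_0,\mu/L\}$, and invoke maximal monotonicity of $F+G$ (Lemma~\ref{lemmae}). The only cosmetic difference is that you apply monotonicity of $F+G$ in one step to the pairs $(y_{n_k},v_{n_k})$ and $(z,v)$, while the paper first uses monotonicity of $G$ and then discards the term $\langle h-y_{n_k},Fh-Fy_{n_k}\rangle\ge 0$ via monotonicity of $F$; your explicit justification of the stepsize lower bound is a point the paper states without proof.
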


\begin{proof}
Let $(h, g) \in \operatorname{Graph}(F+G)$, i.e., $g-F h \in G h$. Since
$y_{n_{k}}= \left(I+\gamma_{n_{k}} G\right)^{-1}\left(I-\gamma_{n_{k}} F\right) w_{n_{k}} $, one obtains
$\left(I-\gamma_{n_{k}} F\right) w_{n_{k}} \in\left(I+\gamma_{n_{k}} G\right) y_{n_{k}} $, which implies
\[
\frac{1}{\gamma_{n_{k}}}\left(w_{n_{k}}-y_{n_{k}}-\gamma_{n_{k}} F w_{n_{k}}\right) \in G y_{n_{k}}\,.
\] 
On the other hand, by the maximal monotonicity of $ G $, one has
\[
\left\langle h-y_{n_{k}}, g-F h-\left(w_{n_{k}}-y_{n_{k}}-\gamma_{n_{k}} F w_{n_{k}}\right)/{\gamma_{n_{k}}}\right\rangle \geq 0\,.
\]
Therefore,
\[
\begin{aligned}\left\langle h-y_{n_{k}}, g\right\rangle & \geq\left\langle h-y_{n_{k}}, F h+\left(w_{n_{k}}-y_{n_{k}}-\gamma_{n_{k}} F w_{n_{k}}\right)/{\gamma_{n_{k}}}\right\rangle \\ &=\left\langle h-y_{n_{k}}, F h-F w_{n_{k}}\right\rangle+\left\langle h-y_{n_{k}}, \left(w_{n_{k}}-y_{n_{k}}\right)/{\gamma_{n_{k}}}\right\rangle \\ &=\left\langle h-y_{n_{k}}, F h-F y_{n_{k}}\right\rangle+\left\langle h-y_{n_{k}}, F y_{n_{k}}-F w_{n_{k}}\right\rangle +\left\langle h-y_{n_{k}}, \left(w_{n_{k}}-y_{n_{k}}\right)/{\gamma_{n_{k}}}\right\rangle \\ & \geq\left\langle h-y_{n_{k}}, F y_{n_{k}}-F w_{n_{k}}\right\rangle+\left\langle h-y_{n_{k}}, \left(w_{n_{k}}-y_{n_{k}}\right)/{\gamma_{n_{k}}}\right\rangle\,. \end{aligned}
\]
Since $\lim _{n \rightarrow \infty}\left\|w_{n}-x_{n}\right\|=0$, $\lim _{n \rightarrow \infty}\left\|w_{n}-y_{n}\right\|=0$, and $ F $ is Lipschitz continuous, one gets	$\lim _{k \rightarrow \infty} \| F y_{n_{k}} -F w_{n_{k}} \|=0 $. By  $ \lim _{n \rightarrow \infty} \gamma_{n}=\gamma \geq \min \left\{\gamma_{0}, \frac{\mu}{L}\right\} $, one obtains
\[
\lim _{k \rightarrow \infty}\left\langle h-y_{n_{k}}, g\right\rangle = \langle h-q, g\rangle  \geq 0\,.
\]  
With the aid of the maximal monotonicity of $F+G$, one obtains $0\in(F+G) q$, that is, $q \in \Omega$.
\end{proof}

\begin{theorem}\label{thm:CQ}
Assume that both $ F $ and $ G $ satisfy conditions
(A1)--(A2). Then the sequence $
\left\{x_{n}\right\}
$ generated by Algorithm 3.1 converges
to an element $ q^{*} \in \Omega $ strongly, where $ q^{*} = \mathcal{P}_{\Omega}x_0 $.
\end{theorem}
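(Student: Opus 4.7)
The plan is to follow the classical hybrid-projection template: show $\Omega\subset C_n\cap Q_n$ for every $n$ so that the projection defining $x_{n+1}$ is well posed and $\{x_n\}$ is bounded; derive the chain of vanishing differences $\|x_{n+1}-x_n\|\to 0\Rightarrow\|w_n-x_n\|\to 0\Rightarrow\|w_n-y_n\|\to 0$; and finally combine Lemma~\ref{lemma3} with Lemma~\ref{lemma0} to upgrade weak cluster points into norm convergence to $q^{\ast}=\mathcal{P}_{\Omega}x_0$.

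For well-posedness I note that by Lemma~\ref{lemmaa} each $C_n$ is closed and convex, and $Q_n$ is a half-space, so $C_n\cap Q_n$ is closed and convex. The inclusion $\Omega\subset C_n$ is exactly what Lemma~\ref{lemma2} provides, while $\Omega\subset Q_n$ follows by induction: the projection characterisation applied to $x_n=\mathcal{P}_{C_{n-1}\cap Q_{n-1}}x_0$ gives $\langle x_n-u,x_n-x_0\rangle\le 0$ for every $u\in C_{n-1}\cap Q_{n-1}\supset\Omega$. Choosing any $q^{\ast}\in\Omega$, the projection defining $x_{n+1}$ yields $\|x_{n+1}-x_0\|\le\|q^{\ast}-x_0\|$, so $\{x_n\}$ is bounded. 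The half-space structure of $Q_n$ also shows $x_n=\mathcal{P}_{Q_n}x_0$, and together with $x_{n+1}\in Q_n$ this gives $\|x_{n+1}-x_n\|^2\le\|x_{n+1}-x_0\|^2-\|x_n-x_0\|^2$; hence $\{\|x_n-x_0\|\}$ is non-decreasing, bounded, and convergent, forcing $\|x_{n+1}-x_n\|\to 0$.

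Next I control the inertial and splitting residuals. Since $\|w_n-x_n\|=\alpha_n\|x_n-x_{n-1}\|\le\xi_n\to 0$ by the adaptive rule~\eqref{alpha}, one has $\|w_n-x_n\|\to 0$, hence $\|w_n-x_{n+1}\|\to 0$. Using $x_{n+1}\in C_n$,
\[
\bigl(1-\mu^2\gamma_n^2/\gamma_{n+1}^2\bigr)\|w_n-y_n\|^2\le\|w_n-x_{n+1}\|^2-\|z_n-x_{n+1}\|^2\le\|w_n-x_{n+1}\|^2.
\]
The sequence $\{\gamma_n\}$ is non-increasing by construction, and the $L$-Lipschitz continuity of $F$ forces $\mu\|w_n-y_n\|/\|Fw_n-Fy_n\|\ge\mu/L$ whenever $Fw_n\ne Fy_n$; hence $\gamma_n\to\gamma\ge\min\{\gamma_0,\mu/L\}>0$ and the coefficient tends to $1-\mu^2>0$. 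Dividing through then yields $\|w_n-y_n\|\to 0$.

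For the conclusion, let $x_{n_k}\rightharpoonup q$ be any weakly convergent subsequence. Since $\|w_n-x_n\|\to 0$ and $\|w_n-y_n\|\to 0$, Lemma~\ref{lemma3} gives $q\in\Omega$, so $\omega_w(x_n)\subset\Omega$. Combined with $\|x_n-x_0\|\le\|q^{\ast}-x_0\|$, Lemma~\ref{lemma0} applied with $u=x_0$ and $C=\Omega$ delivers $x_n\to q^{\ast}$ in norm. The main obstacle is the uniform positivity of $1-\mu^2\gamma_n^2/\gamma_{n+1}^2$: without the a priori lower bound $\gamma_n\ge\min\{\gamma_0,\mu/L\}$ coming from the Lipschitz hypothesis, the inclusion $x_{n+1}\in C_n$ would not be strong enough to force $\|w_n-y_n\|\to 0$ and the entire chain of limits would collapse.
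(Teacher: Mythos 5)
Your proof is correct and follows essentially the same route as the paper: establish $\Omega\subset C_n\cap Q_n$ and well-posedness, use $x_n=\mathcal{P}_{Q_n}x_0$ together with $x_{n+1}\in Q_n$ to get $\|x_{n+1}-x_n\|\to 0$ and hence $\|w_n-y_n\|\to 0$, then invoke Lemma~\ref{lemma3} and Lemma~\ref{lemma0} to pass from weak cluster points to strong convergence to $q^{*}=\mathcal{P}_{\Omega}x_0$. Your write-up is in fact slightly tighter in two spots: you test the $C_n$-inequality directly at $u=x_{n+1}\in C_n$ rather than at $p\in\Omega$, and you explicitly justify that the coefficient $1-\mu^{2}\gamma_n^{2}/\gamma_{n+1}^{2}$ is eventually bounded below by a positive constant via $\gamma_n\to\gamma\ge\min\{\gamma_0,\mu/L\}>0$, a point the paper's Step~2 leaves implicit.
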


\begin{proof}
The proof is divided into three steps.
	
\noindent {\it Step 1}. It is obvious that $C_{n}$ and $Q_{n}$ are  convex closed for all $n \geq 0$.
Next one  shows that $\Omega \subset C_{n} \cap Q_{n},\forall n\ge 0$ and $\left\{x_{n}\right\}$ is well defined.
Lemma~\ref{lemma2} implies that $\Omega \subset C_{n} , \forall n \geq 0$. From the definition of $ Q_{n} $ in Algorithm 3.1, one has $Q_{0}=H$. Further, $\Omega \subset C_{0} \cap Q_{0}$ and $x_{1}=\mathcal{P}_{C_{0} \cap Q_{0}} x_{0}$ is well defined. Without  loss of generality, one assumes that $x_{n}$ is given and $\Omega \subset C_{n} \cap Q_{n}$ for some $n$. This shows that $x_{n+1}=\mathcal{P}_{C_{n} \cap Q_{n}} x_{0}$ is well defined. It follows from the projection  that
$ \left\langle z-x_{n+1}, x_{0}-x_{n+1}\right\rangle \leq 0 , \forall z \in C_{n} \cap Q_{n} $.
Since  $\Omega \subset C_{n} \cap Q_{n}$, one concludes
$ \left\langle u-x_{n+1}, x_{0}-x_{n+1}\right\rangle \leq 0 , \forall u \in \Omega $.  This implies that $\Omega \subset Q_{n+1}$, and  thus $\Omega \subset C_{n+1} \cap Q_{n+1}$.

\noindent {\it Step 2}. One shows  that  $ \left\{x_{n}\right\}$ is bounded and $\lim _{n \rightarrow \infty}\left\|w_{n}-y_{n}\right\|=0$.
Since $ \Omega \subset C_{n} \cap Q_{n}  $ and  $x_{n+1}= \mathcal{P}_{C_{n} \cap Q_{n}} x_{0}$, one gets
$  \left\|x_{n+1}-x_{0}\right\| \leq\left\|q^{*}-x_{0}\right\| , \forall n \geq 0 $. This means that $ \left\{x_{n}\right\}$ is bounded,  so are $\left\{w_{n}\right\}$ and $\left\{z_{n}\right\}$.
Combining the definition of $ Q_{n} $ and the projection, one has $x_{n}=\mathcal{P}_{Q_{n}}x_{0}$.  Since $x_{n+1} \in Q_{n}$, one further has
\[
\left\|x_{n}-x_{0}\right\| \leq\left\|x_{n+1}-x_{0}\right\| \,, \,\forall n \geq 0\,.
\]
Thus $\lim _{n \rightarrow \infty}\left\|x_{n}-x_{0}\right\|$ exists. It follows that
\[
\begin{aligned}\left\|x_{n}-x_{n+1}\right\|^{2}   \leq\left\|x_{n+1}-x_{0}\right\|^{2}-\left\|x_{n}-x_{0}\right\|^{2} \,.\end{aligned}
\]
We see that
$ \lim _{n \rightarrow \infty} \left\|x_{n}-x_{n+1}\right\| = 0 $. Since
$  \left\|x_{n+1}-z_{n}\right\|  \leq\left\|w_{n}-x_{n+1}\right\| $ and $  \left\|w_{n}-x_{n}\right\| \leq\left|\alpha_{n}\right|\left\|x_{n}-x_{n-1}\right\| $, one arrives at  $ \lim _{n \rightarrow \infty} \left\|z_{n}-w_{n}\right\| \leq \lim _{n \rightarrow \infty} \left\{\left\|z_{n}-x_{n}\right\|+\left\|x_{n}-w_{n}\right\| \right\} =~0 $. Then
\[
\bigg(1-\mu^{2} \frac{\gamma_{n}^{2}}{\gamma_{n+1}^{2}}\bigg)\left\|w_{n}-y_{n}\right\|^{2} \leq\left\|w_{n}-p\right\|^{2}-\left\|z_{n}-p\right\|^{2}  \leq\left(\left\|w_{n}-p\right\|+\left\|z_{n}-p\right\|\right)\left\|z_{n}-w_{n}\right\|\,.
\]
It is clear to see that $\lim _{n \rightarrow \infty}\left\|w_{n}-y_{n}\right\|=0$.

\noindent {\it Step 3}. One shows that $\{x_{n}\}$ converges to   $ q^{*} \in \Omega$ strongly, where $q^{*}=\mathcal{P}_{\Omega} x_{0}$.
Note that
\begin{enumerate}[(1)]
	\item If $ q^{*}=\mathcal{P}_{\Omega} x_{0} $, then
	$
	\|x_{n+1} - x_{0}\| \le \| x_{0} - q^{*}\| , \,\forall n\in \mathbb{N}.
	$	
	\item
	Every sequential weak cluster point of the sequence  $\left\{x_{n}\right\}$ is in $ \Omega $, i.e., $ \omega_{w}\left(x_{n}\right) \subset \Omega$.
\end{enumerate}
By Lemma~\ref{lemma0}, one concludes that  $ \left\{x_{n}\right\} $ converges to the point $q^{*} \in \Omega$ strongly, where $ q^{*}=\mathcal{P}_{\Omega} x_{0} $. The proof is completed.
\end{proof}

\subsection{The Inertial Shrinking Projection Algorithm}

\begin{algorithm}[H]
	\textbf{Algorithm 3.2: Inertial Shrinking Projection Algorithm (ISPA).}\\
	\label{SP}
	\KwIn{$x_{-1} = x_{0}$,
		$\gamma_{0}>0$, $ \mu \in (0,1) $, $ C_{0}=H $, $ \alpha_{n}\in[0,1) $.}
	\For{$n =0:\text{Maxiters}$}
	{
		\begin{equation}\label{eq:alg}
		\left\{\begin{aligned}
		w_{n}&=x_{n}+\alpha_{n}\left(x_{n}-x_{n-1}\right)\,,\\
		y_{n}&=\left(I+\gamma_{n} G\right)^{-1}\left(I-\gamma_{n} F\right) w_{n}\,, \\
		z_{n} &=y_{n}-\gamma_{n}\left(F y_{n}-F w_{n}\right)\,, \\
		C_{n+1} &=\Big\{u \in C_{n} :\left\|z_{n}-u\right\|^{2} \leq\left\|w_{n}-u\right\|^{2}-\big(1-\mu^{2}\frac{\gamma_{n}^{2}}{\gamma_{n+1}^{2}}\big)\left\|w_{n}-y_{n}\right\|^{2}\Big\}, \\ x_{n+1} &=\mathcal{P}_{C_{n+1}} x_{0} \,, \, n\ge 0\,.\end{aligned}\right.
		\end{equation}
		Update $ \gamma_{n} $ by \eqref{update}.
	}
\end{algorithm}

\begin{theorem}
	Assume that both $ F $ and $ G $ satisfy conditions
	(A1)--(A2). Then the sequence $
	\left\{x_{n}\right\}
	$ generated by Algorithm 3.2 converges
	to an element $ q^{*} \in \Omega $ strongly, where $ q^{*} = \mathcal{P}_{\Omega}x_0 $.
\end{theorem}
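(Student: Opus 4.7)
The plan is to follow the same three-step scheme used for Algorithm 3.1, replacing the role played by $Q_n$ in the hybrid method by the nested structure $C_{n+1}\subset C_n$ intrinsic to the shrinking projection. First I would verify by induction that each $C_n$ is nonempty, closed and convex, and that $\Omega\subset C_n$ for all $n$. Closedness and convexity of $C_{n+1}$ follow from Lemma \ref{lemmaa}, since it is the intersection of $C_n$ with a set of the form $\{u:\|z_n-u\|^2\le\|w_n-u\|^2+\langle \cdot,\cdot\rangle+a\}$. The inclusion $\Omega\subset C_{n+1}$ is exactly the content of Lemma \ref{lemma2} combined with the inductive hypothesis $\Omega\subset C_n$. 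Hence $x_{n+1}=\mathcal{P}_{C_{n+1}}x_0$ is well defined.

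In the second step I would extract boundedness and the key null-sequence $\|w_n-y_n\|\to 0$. Since $\Omega\subset C_{n+1}$ and $x_{n+1}$ is the projection of $x_0$ onto $C_{n+1}$, one has $\|x_{n+1}-x_0\|\le\|q^*-x_0\|$, so $\{x_n\}$ is bounded, and hence so are $\{w_n\}$, $\{y_n\}$, $\{z_n\}$. The analogue of the $Q_n$-monotonicity argument here uses $C_{n+1}\subset C_n$: because $x_n=\mathcal{P}_{C_n}x_0$ and $x_{n+1}\in C_{n+1}\subset C_n$, the characterization of the metric projection gives $\|x_n-x_0\|\le\|x_{n+1}-x_0\|$ and
\[
\|x_{n+1}-x_n\|^2\le\|x_{n+1}-x_0\|^2-\|x_n-x_0\|^2.
\]
Thus $\{\|x_n-x_0\|\}$ is nondecreasing and bounded, so it converges, and consequently $\|x_{n+1}-x_n\|\to 0$. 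Combined with $\|w_n-x_n\|\le\alpha_n\|x_n-x_{n-1}\|\to 0$ (using the adaptive rule \eqref{alpha} or directly the boundedness of $\alpha_n$), this yields $\|w_n-x_{n+1}\|\to 0$, hence $\|z_n-w_n\|\to 0$ via $\|z_n-x_{n+1}\|\le\|w_n-x_{n+1}\|$ (which holds because $x_{n+1}\in C_{n+1}$). Plugging this into the defining inequality of $C_{n+1}$ with $u=p\in\Omega$ and using $\mu\in(0,1)$ together with $\gamma_n/\gamma_{n+1}\to 1$, one extracts $\|w_n-y_n\|\to 0$.

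In the third step I would identify the strong limit. Every weak cluster point of $\{x_n\}$ lies in $\Omega$ by Lemma \ref{lemma3}, since $\|w_n-x_n\|\to 0$ and $\|w_n-y_n\|\to 0$. Combined with $\|x_n-x_0\|\le\|q^*-x_0\|$ for $q^*=\mathcal{P}_\Omega x_0$, Lemma \ref{lemma0} gives $x_n\to q^*$ in norm.

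The main obstacle is the one place where the argument genuinely differs from the hybrid-projection case: recovering a monotonicity-and-telescoping inequality without the auxiliary half-space $Q_n$. The observation that $x_n=\mathcal{P}_{C_n}x_0$ together with $x_{n+1}\in C_{n+1}\subset C_n$ supplies this, but one must be careful to exploit the nestedness at each use (both for the monotone increase of $\|x_n-x_0\|$ and for $\|z_n-x_{n+1}\|\le\|w_n-x_{n+1}\|$); everything else reduces to routine manipulations already carried out for Algorithm 3.1.
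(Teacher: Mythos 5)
Your proposal is correct and follows essentially the same route as the paper: establish $\Omega\subset C_n$ via Lemma \ref{lemma2}, use the nestedness $C_{n+1}\subset C_n$ together with $x_n=\mathcal{P}_{C_n}x_0$ to get monotonicity and boundedness of $\|x_n-x_0\|$, deduce $\|x_{n+1}-x_n\|\to 0$ and $\|w_n-y_n\|\to 0$, and conclude by Lemmas \ref{lemma3} and \ref{lemma0}. The paper's own proof is terser (it delegates the null-sequence arguments to the corresponding step of Theorem \ref{thm:CQ}), but the substance is identical.
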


\begin{proof}
From Lemma~\ref{lemma2}, one easily concludes that 	
\begin{equation*}
\left\|z_{n}-p\right\|^{2} \leq\left\|w_{n}-p\right\|^{2}-\bigg(1-\mu^{2} \frac{\gamma_{n}^{2}}{\gamma_{n+1}^{2}}\bigg)\left\|w_{n}-y_{n}\right\|^{2} \,, \,\forall p \in \Omega\,.
\end{equation*}
Since $x_{n}=\mathcal{P}_{C_{n}} x_{0}$ and $x_{n+1} = \mathcal{P}_{C_{n+1}} x_{0}  \in C_{n+1} \subset C_{n}$, we obtain
$ \left\|x_{n}-x_{0}\right\| \leq\left\|x_{n+1}-x_{0}\right\|$. On the other hand, from $\Omega \subset C_{n}$, we get
$ \left\|x_{n}-x_{0}\right\| \leq\left\|u-x_{0}\right\| $. It implies that the  $\left\{x_{n}\right\}$ is bounded and nondecreasing. Thus, $\lim _{n \rightarrow \infty}\left\|x_{n}-x_{0}\right\|$ exists.
From Step 3 in Theorem~\ref{thm:CQ},  $\lim _{n \rightarrow \infty}\left\|x_{n+1}-x_{n}\right\| =0$ and $\lim _{n \rightarrow \infty}\left\|w_{n}-y_{n}\right\| =0$ hold. From Lemma~\ref{lemma3} and Lemma~\ref{lemma0},     $ \left\{x_{n}\right\} $ converges  to the point $q^{*} \in \Omega$ strongly, where $ q^{*}=\mathcal{P}_{\Omega} x_{0} $. 
\end{proof}

\section{Numerical Results}\label{sec4}

In this section, we   give some numerical examples to illustrate the effectiveness and robustness of the proposed algorithms in Section \ref{sec3}. We compare  the two strong convergence algorithms, proposed by Gibali and Thong~\cite{gibali2018tseng},   Mann Tseng-type   algorithm  and Viscosity Tseng-type   algorithm. All the programs are performed in MATLAB2018a on a PC Desktop Intel(R) Core(TM) i5-8250U CPU @ 1.60GHz 1.800 GHz, RAM 8.00 GB.

Based on Mann and Viscosity ideas, Gibali and Thong~\cite{gibali2018tseng} presented  two modifications of the forward-backward splitting method  in  real Hilbert spaces  as follows:

\mybox{
	\textbf{Algorithm 4.1: Mann Tseng-type modification   (MTTM).}
	\label{AGENT}
	\begin{equation*}\label{eqAGENT}
	\left\{\begin{aligned}
	y_{n}&=\left(I+\gamma_{n} G\right)^{-1}\left(I-\gamma_{n} F\right) x_{n}\,, \\
	z_{n} &=y_{n}-\gamma_{n}\left(F y_{n}-F x_{n}\right)\,, \\
	x_{n+1} &=\left(1-\delta_{n}-\theta_{n}\right) x_{n}+\theta_{n} z_{n}\,.
	\end{aligned}\right.
	\end{equation*}
	Update $ \gamma_{n} $ by \eqref{update},
}
and

\mybox{
	\textbf{Algorithm 4.2: Viscosity Tseng-type modification   (VTTM).}
	\label{COGENT}
	\begin{equation*}\label{eqCOGENT}
	\left\{\begin{aligned}
	y_{n}&=\left(I+\gamma_{n} G\right)^{-1}\left(I-\gamma_{n} F\right) x_{n}\,, \\
	z_{n} &=y_{n}-\gamma_{n}\left(F y_{n}-F x_{n}\right)\,, \\
	x_{n+1}&=\delta_{n} f\left(x_{n}\right)+\left(1-\delta_{n}\right) z_{n}\,.
	\end{aligned}\right.
	\end{equation*}
	Update $ \gamma_{n} $ by \eqref{update},
}
\noindent
where $\{\delta_{n}\}$ and $\{\theta_{n}\}$ are two real sequences in $ (0,1) $ such that $ \{\theta_{n}\}\subset (a,b)\subset (0,1-\delta_{n}) $ for some $ a>0,b>0 $, $\lim _{n \rightarrow \infty} \delta_{n}=0, \sum_{n=1}^{\infty} \delta_{n}=\infty$, and  $f: H \rightarrow H$  is a contraction.

\begin{example}\label{ex0}
Let $x=\left(x_{1}, x_{2}, \ldots,x_{10}\right) \in \mathbb{R}^{10}$ and define $F: \mathbb{R}^{10} \rightarrow \mathbb{R}^{10}$ and $G: \mathbb{R}^{10} \rightarrow \mathbb{R}^{10}$ by $F x=2 x+(1,1,\ldots,1)$ and  $G x=5 x$, respectively.
It is  clear to see that $G$ is maximally monotone, and $F$ is $2$-Lipschitz continuous and monotone. After simple calculations, we obtain
\[
\left(Id+\gamma_{n} G\right)^{-1}\left(x_{n}-\gamma_{n} F x_{n}\right) = \frac{1-2 \gamma_{n}}{1+5 \gamma_{n}} x_{n}-\frac{\gamma_{n}}{1+5 \gamma_{n}}(1,1,\ldots,1)\,.
\]

Our parameters are seted as follows. The stepsizes of the four algorithms are updated by  \eqref{update} with $ \gamma_0=0.4 $ and $ \mu=0.5 $. Algorithm 3.1 updates the inertial parameters by $ \alpha_{n}=\frac{n-1}{n+3} $.  Algorithm 4.1, Algorithm 4.2  and Algorithm 3.2 updates the inertial parameters by \eqref{alpha} with $ \alpha=0.6 $ and $ \xi_{n}=\frac{1}{(n+1)^2} $. In Algorithm 4.1 and Algorithm 4.2, we set $ \delta_{n} = \frac{1}{n+1} $, $ \theta_{n} = \frac{n}{2(n+1)} $, $ f(x)=0.5x $. the maximum iteration of $ 100 $  as the stopping criterion. Fig.~\ref{fig1} shows the convergence behavior of $ \{\|x_{n}-x^*\|\} $, where $ x^* = -(1,1,\ldots,1)/7 $. The numerical results illustrate  that the inertial parameters plays a positive role in the convergence speed and the precision of the algorithms.
\end{example}
\begin{figure}[h]
	\centering
	\includegraphics[scale=0.7]{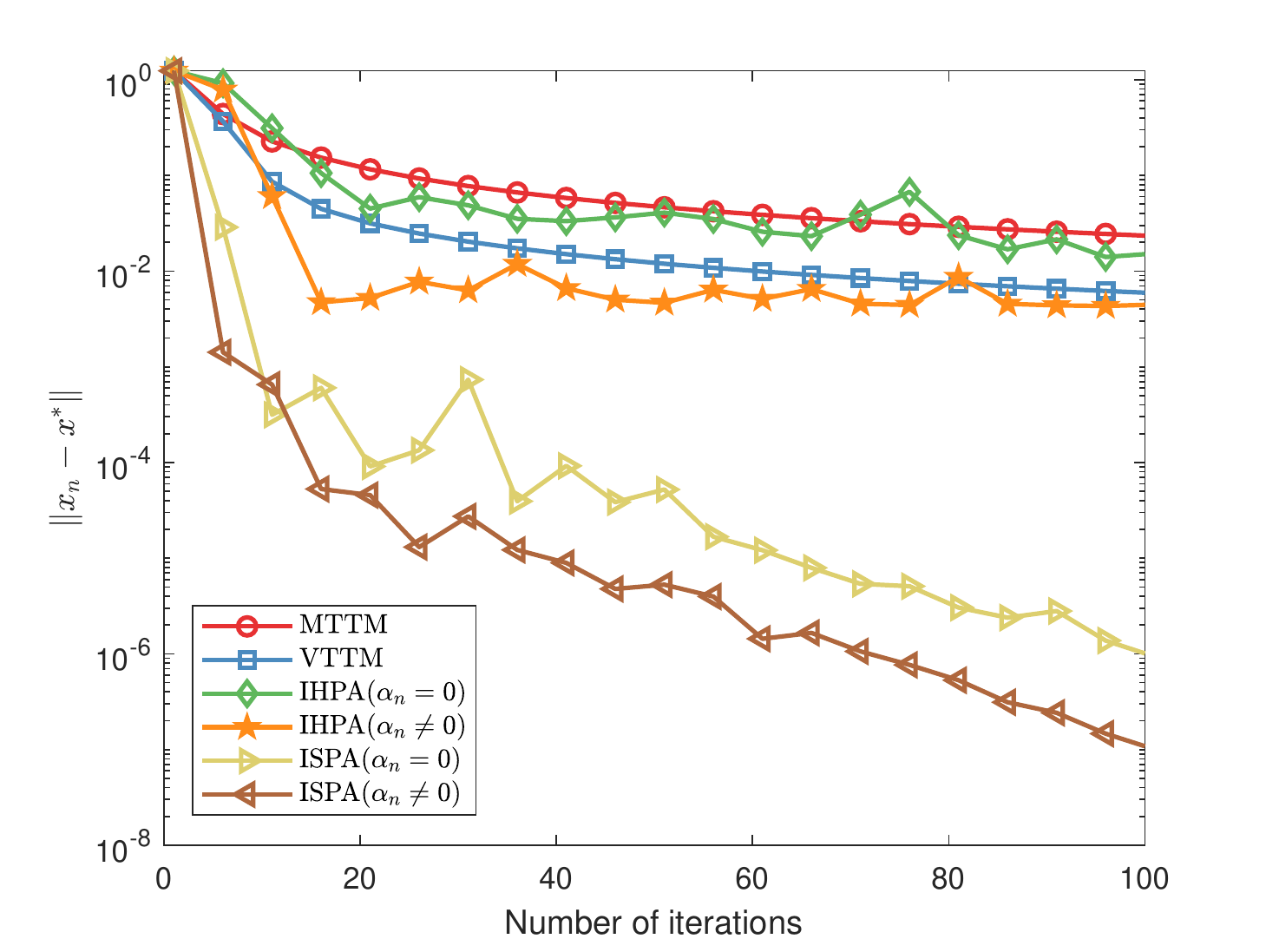}
	\caption{Convergence behavior of iterative sequences $ \{\|x_{n}-x^*\|\} $.}
	\label{fig1}
\end{figure}

\begin{example}\label{ex1}
 Find a solution of the following convex minimization problem:
 \[
 \min _{x \in \mathbb{R}^{2}} \|x\|_{2}^{2}+(3,5) x+\|x\|_{1}\,,
 \]
 where $x=\left(x_{1}, x_{2}\right) \in \mathbb{R}^{2}$. We know  the exact solution $ x^* $ is $ (-1,-2) $ and the minimum vaule is $ -5 $.

Next, we  use our algorithms to solve the minimization problem in Example~\ref{ex1}. Set $F(x)=\|x\|_{2}^{2}+(3,5)x$, $ G(x)=\|x\|_1 $ and $ \Phi(x)= F(x)+G(x)$. It is clear  that $ F $ is  convex   differentiable with $ \nabla F=2x+(3,5) $, $ G $ is convex   lower semicontinuous but not differentiable. Note that
\[
\begin{aligned}({I}+\gamma \partial G)^{-1}(x)=\left(\max \left\{\left|x_{1}\right|-\gamma, 0\right\} \operatorname{sign}\left(x_{1}\right), \max \left\{\left|x_{2}\right|-\gamma, 0\right\} \operatorname{sign}\left(x_{2}\right)\right)\,. \end{aligned}
\]
Our parameters are seted as same as in Example~\ref{ex0}.
Fig.~\ref{figxkk0}  shows the convergence behavior of the iterative sequence  $ \{\|x_{n}-x_{0}\|\} $. Fig.~\ref{figphik} shows the convergence behavior of the sequence $ \{\|\Phi(x_{n})-\Phi(x^{*})\|\} $.
\end{example}
\begin{figure}[H]
	\centering
	\includegraphics[scale=0.7]{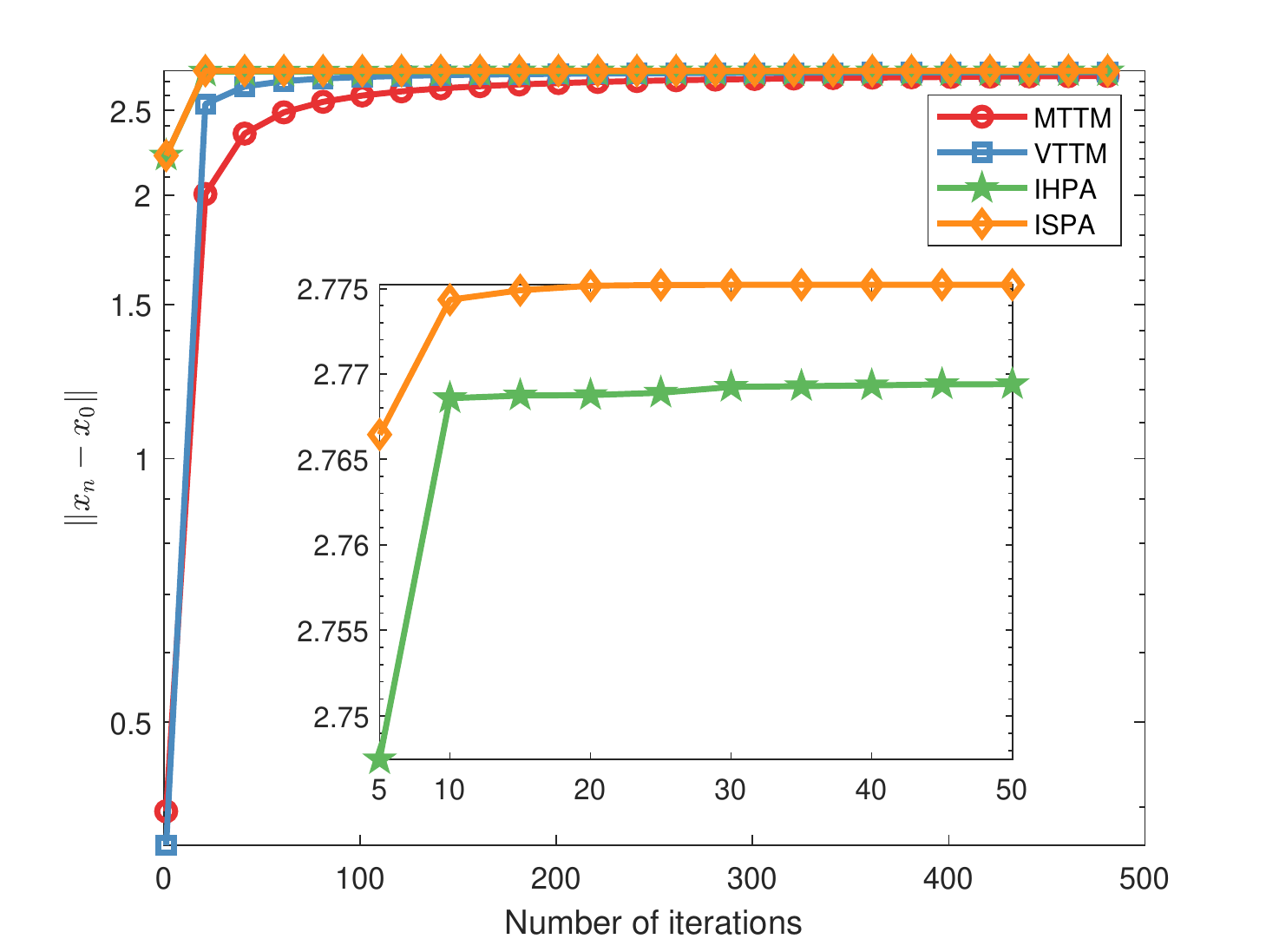}
	\caption{Convergence behavior of iterative sequences $ \{\|x_{n}-x_{0}\|\} $.}
	\label{figxkk0}
\end{figure}
\begin{figure}[H]
	\centering
	\includegraphics[scale=0.7]{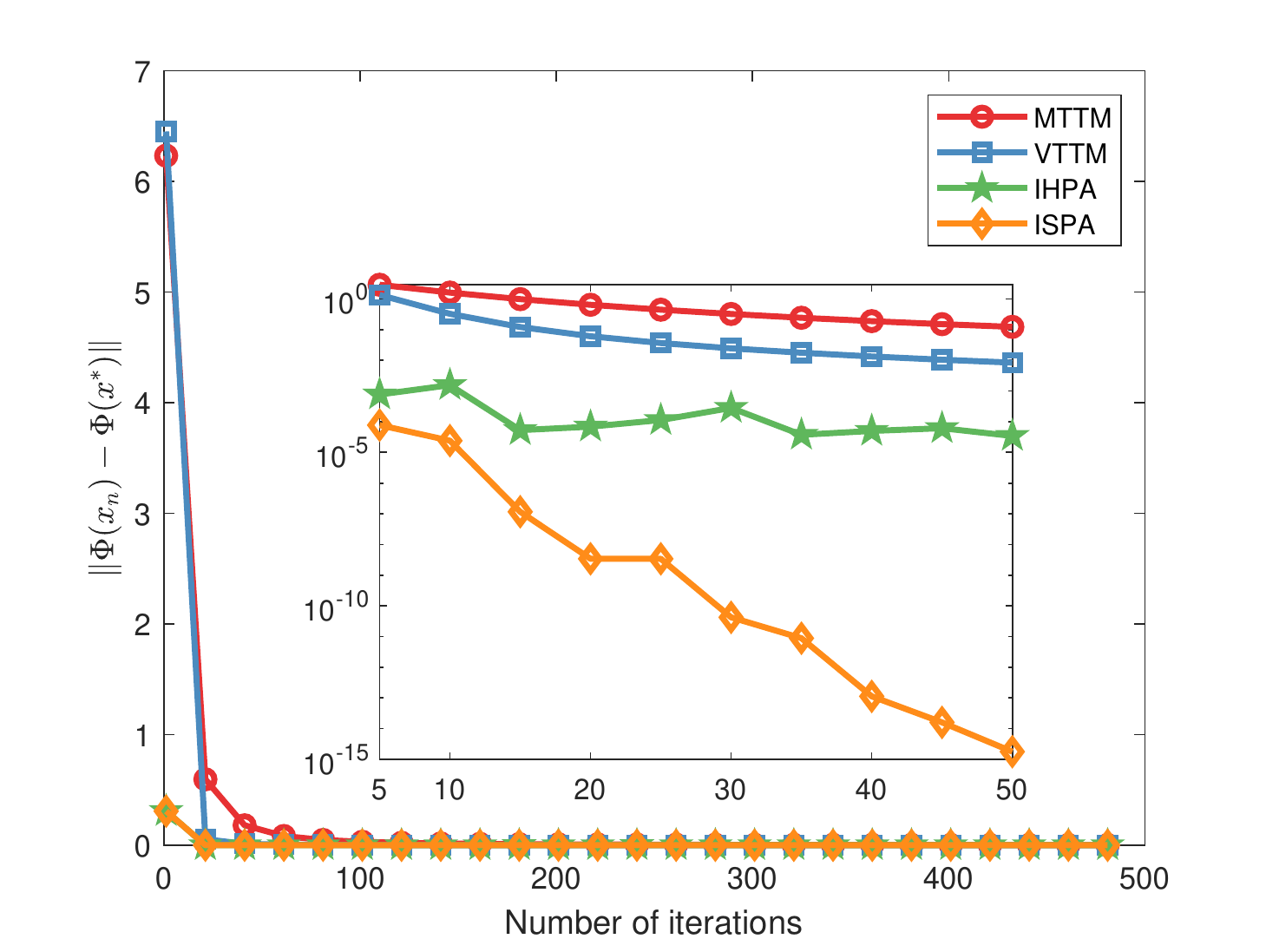}
	\caption{Convergence behavior of iterative sequences $\{\|\Phi(x_{n})-\Phi(x^{*})\|\} $.}
	\label{figphik}
\end{figure}

As shown in Figs.~\ref{figxkk0} and \ref{figphik},   sequence $\{\|\Phi(x_{n})-\Phi(x^{*})\| \}$ converges to $ 0 $ means that the function value converges to the optimal value. In addition, it is clear that the convergence speed of the iterative sequence $ \{x_{n}\} $ and $ \Phi(x_{n}) $ of  Algorithm 3.1 and Algorithm 3.2 is faster than  Algorithm 4.1 and Algorithm 4.2.

Further, we  show the numerical results  in Table~\ref{tab1}.  The function value  $ \{\Phi(x_{n})\} $ converges to the optimal value $ \Phi(x^*)=-5 $ as the number of iterations increases. We find that our proposed Algorithm 3.1 and Algorithm 3.2 enjoy higher precision than Algorithm 4.1 and Algorithm 4.2. It should be pointed out that our Algorithm 3.1 and Algorithm 3.2 require only a few iterations to achieve  convergence (cf. Table~\ref{tab1}).

\begin{table}[h!]
	\centering
	\renewcommand\arraystretch{1.5}
	\caption{Comparison of four algorithms in Example~\ref{ex1}.}
			\begin{tabular}{ccccc}
				\toprule
				\multirow{2}[4]{*}{iter $n$} &  \multicolumn{4}{c}{$\|\Phi(x_n)-\Phi(x^*)\|$} \\
				\cmidrule{2-5}          & MTTM & VTTM & IHPA & ISPA  \\
				\midrule
				1     &     $6.2330e+00$ & $6.4518e+00$ & $3.0808e-01$ & $3.0808e-01$ \\
				10    &    $1.6077e+00$ & $3.2709e-01$ & $1.5545e-03$ & $2.4074e-05$ \\
				20    &    $6.4395e-01$ & $6.0776e-02$ & $6.8905e-05$ & $3.4213e-09$ \\
				100   &    $3.1369e-02$ & $2.0442e-03$ & $4.1663e-05$ & $2.5848e-10$ \\
				300   &    $3.5320e-03$ & $2.2375e-04$ & $2.2536e-05$ & $1.7764e-15$ \\
				500   &    $1.2749e-03$ & $8.0326e-05$ & $1.6109e-05$ & $8.8818e-16$ \\
				\bottomrule
			\end{tabular}
	\label{tab1}	
\end{table}

To show that our algorithms are robust, four different initial values were tested, and the experimental results are reported in Table~\ref{tab2}.
\begin{table}[h!]
	\centering
	\renewcommand\arraystretch{1.5}
	\caption{Function value errors at different initials.}
	\begin{tabular}{ccccc}
		\toprule
		\multirow{2}[4]{*}{Start point $x_{0}$} & \multicolumn{4}{c}{$\|\Phi(x_n)-\Phi(x^*)\|$} \\
		\cmidrule{2-5}        & MTTM & VTTM & IHPA & ISPA \\
		\midrule
		$[0.6787 ,0.7577]$ & $1.2749e-03$ & $8.0326e-05$ & $2.1152e-05$ & $8.8818e-16$ \\
		$[-0.6739,-0.2305]$ & $1.2749e-03$ & $8.0326e-05$ & $2.7860e-05
		$ & $8.8818e-16$ \\
		$[0.4218,-0.9157]$ & $1.2749e-03$ & $8.0326e-05$ & $8.4837e-06$ & $1.7764e-15$ \\
		$[-0.9575,0.9649]$ & $1.2749e-03$ & $8.0326e-05$ & $1.4506e-05$ & $1.7764e-15$ \\
		\bottomrule
	\end{tabular}%
	\label{tab2}%
\end{table}%

In addition, we also plot the convergence behavior of $ \{\|\Phi(x_{n})-\Phi(x^{*})\|\} $ with different initial points in Fig.~\ref{figcom4}. Note that the projection type algorithms converge faster than the others. These results are independent of the choice of  initial values. This shows that our algorithms are effective and robust.
\begin{figure}[h!]
	\centering
	\includegraphics[scale=0.45]{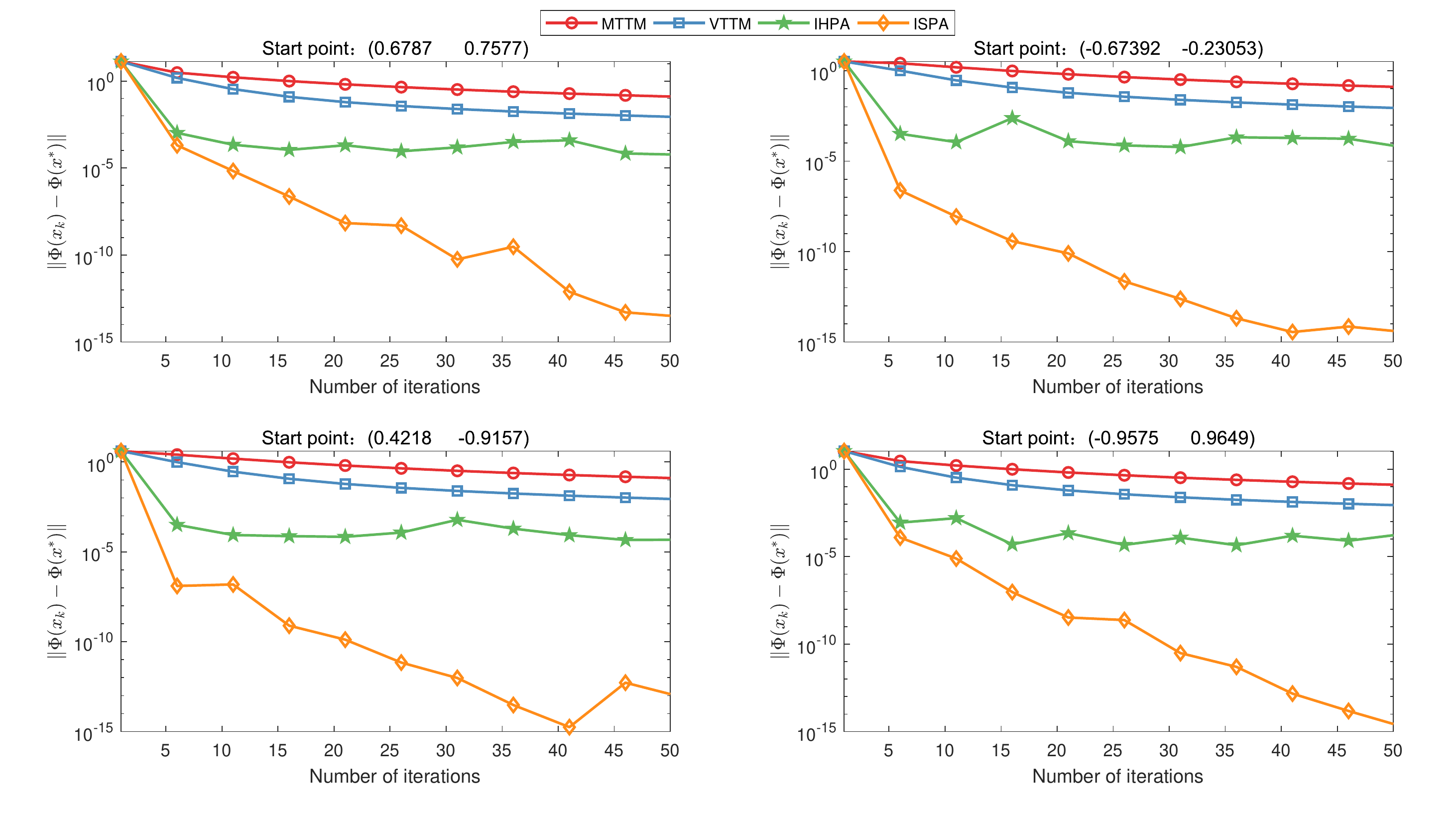}	
	\caption{Component convergence behavior of $\|\Phi(x_{n})-\Phi(x^{*})\| $ with different initials.}
	\label{figcom4}
\end{figure}

\section{Conclusion}\label{sec5}

Forward-Backward splitting algorithms are efficient  and powerful    to  monotone inclusion problems. In this paper, we investigated  the problem of  finding a zero of the sum of two monotone operators in real Hilbert spaces  by proposing two  projection-based algorithms with inertial effects.  Our algorithms  use  a new stepsizes rule which makes them more efficient and robust.

\end{document}